\newcommand{\OO}{\mathcal O}
\newcommand{\Z}{\mathbb Z}
\newcommand{\C}{\mathbb C}
\newcommand{\id}{\operatorname{id}}
\newcommand{\Irr}{\operatorname{Irr}}
\newcommand{\Rad}{{\operatorname{Rad}}}
\newtheorem{defi}{Definition}[section]
\newtheorem{remark}[defi]{Remark}
\newtheorem{example}[defi]{Example}
\newtheorem{conjecture}[defi]{Conjecture}
\newtheorem{corollary}[defi]{Corollary}
\newtheorem{prop}[defi]{Proposition}
\title{A counterexample to a conjecture on Cartan determinants of monoid algebras}
\author{Florian Eisele}
\address{Department of Mathematics, University of Manchester, Oxford Road, Manchester, M13 9PL}
\email{florian.eisele@manchester.ac.uk}
\newtheoremstyle{named}{}{}{\itshape}{}{\bfseries}{.}{.5em}{\thmnote{#3}}
\theoremstyle{named}
\renewcommand{\leq}{\leqslant}
\renewcommand{\geq}{\geqslant}
\subjclass{}
\begin{document}
\begin{abstract}
    We show that there are finite monoids $M$ such that the Cartan matrix of the monoid algebra $\C M$ is non-singular, whilst the Cartan matrix of $kM$ is singular for some field $k$ of positive characteristic, disproving a recent conjecture of Steinberg.
\end{abstract}

\maketitle

\section{Introduction}

It is well-known that, regardless of the base field, the Cartan matrix of a finite group algebra is always non-singular. In fact, if the base field has characteristic $p>0$, its determinant is always a $p$-power, and if the base field is $\C$, the Cartan matrix is simply the identity matrix. Steinberg \cite{SteinbergModRepMonoid} recently conjectured that this behaviour partially generalises to finite monoid algebras.

\begin{conjecture}[{\cite[Conjecture 3.7]{SteinbergModRepMonoid}}]\label{conjecture cartan}
    Let $M$ be a finite monoid, and assume that the Cartan matrix of the monoid algebra $\C M$ is non-singular. Then the Cartan matrix of $kM$ is non-singular for any field~$k$.
\end{conjecture}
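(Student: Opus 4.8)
The plan is to exhibit an explicit finite monoid $M$ and a prime $p$ such that the Cartan matrix of $\C M$ is non-singular whereas that of $\F_p M$ is singular. Recall the relevant structure theory: for a finite monoid $M$ and a field $k$, the simple $kM$-modules are parametrised by pairs $(J,V)$ with $J$ a regular $\mathcal J$-class of $M$ and $V$ a simple module of the group algebra $k[G_J]$ of a maximal subgroup $G_J$ of $J$; moreover $kM$ carries its principal-series filtration by ideals indexed by the $\mathcal J$-order, the successive quotients being the contracted semigroup algebras of the principal factors --- Munn algebras over the $k[G_J]$ built from the structure (sandwich) matrices $P_J$. Using the classical description of the projective indecomposable $kM$-modules via the idempotents lying in the $\mathcal J$-classes and the algebras $e_J kM e_J \cong k(e_J M e_J)$, the computation of the Cartan matrix $C_{kM}=\bigl([P(S_{J,V}):S_{J',V'}]\bigr)$ reduces to linear algebra over the $k[G_J]$ and the matrices $P_J$.

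The hypothesis on $\C M$ will be arranged to hold trivially: take $M$ to be a regular monoid with every $P_J$ invertible over $\C[G_J]$, so that $\C M$ is semisimple (by the classical semisimplicity criterion for monoid algebras) and $C_{\C M}$ is the identity matrix. This also pinpoints the requirements on $M$. If $\operatorname{char} k$ divides no $|G_J|$ --- in particular when $k=\C$, or for every $k$ when $M$ is aperiodic --- then each $P_J$ is von Neumann regular over the semisimple $k[G_J]$, $kM$ is standardly stratified along the $\mathcal J$-order, and $\det C_{kM}$ is a product of positive integers, hence non-zero; so the counterexample must involve a regular $\mathcal J$-class $J$ with $p\mid|G_J|$. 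Furthermore, since the projective cover of a simple supported on the minimal ideal cannot pick up composition factors from higher $\mathcal J$-classes, $J$ must lie strictly between the minimal ideal and the top and be a non-group $\mathcal J$-class; concretely one wants $P_J$ invertible over $\C[G_J]$ yet \emph{not} von Neumann regular over $\F_p[G_J]$, which requires in particular that $p$ divide the determinant of the $0/1$ support matrix $\overline{P_J}$.

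Given such an $M$, the computation proceeds as follows: list the simple $\F_p M$-modules together with their dimensions (governed by ranks over $\F_p$ of the sandwich matrices $P_J$ acted on by $V$); construct each projective indecomposable $\F_p M$-module from the principal series and the primitive idempotents and read off its composition factors; finally write down $C_{\F_p M}$ and exhibit an exact integer linear dependence among its rows. In quiver terms this amounts to producing an oriented $2$-cycle $S_{J,V}\leftrightarrow S_{J',V'}$ with $J'>J$ --- the ``upward'' arrow being the one created by the failure of $\F_p[G_J]$ to be semisimple --- with the composition multiplicities tuned so that $\det C_{\F_p M}=0$.

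I expect the main obstacle to be twofold. First, \emph{locating} a suitable $M$: I would do this by a computer search over small monoids having a maximal subgroup of order divisible by a small prime, reducing $kM$ modulo $p$, computing projective indecomposables and the Cartan matrix, retaining the candidates with $\det C_{\F_p M}=0$ while $\C M$ is semisimple, and then minimising. Second, converting the output into a self-contained proof: the delicate point is the exact determination of the composition multiplicities of the projective indecomposable $\F_p M$-modules --- the monoid analogue of decomposition numbers --- and checking that the exhibited dependence among the rows of $C_{\F_p M}$ is exact, as it must be, since $C_{\F_p M}$ has non-negative integer entries and ``singular'' here means that its determinant is literally zero, not merely divisible by $p$.
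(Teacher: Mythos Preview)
Your plan has a fatal gap: you propose to search among \emph{regular} monoids (indeed, you want $\C M$ semisimple, which forces regularity), but Steinberg proved in the very paper where the conjecture is stated that it holds for all regular monoids. Hence no regular monoid can serve as a counterexample, and the structural analysis you set up --- sandwich matrices $P_J$ invertible over $\C[G_J]$ but not von Neumann regular over $\F_p[G_J]$, quiver $2$-cycles, a computer search over small regular monoids --- is guaranteed to come up empty. The requirement that $\C M$ be semisimple is already too restrictive: any counterexample must have $\C M$ non-semisimple yet with non-singular Cartan matrix.

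The paper's construction goes in the opposite direction. It builds a deliberately \emph{non}-regular monoid $M(G,X)=G\uplus X\uplus\{z\}$, where $G$ is a finite group, $z$ is a zero, and $X$ is a $(G,G)$-biset with $xy=z$ for all $x,y\in X$. The $G\times G$-orbits on $X$ are the non-regular $\mathcal J$-classes; since $kX$ squares to zero it lies in the radical, so the simple $k_0M(G,X)$-modules are exactly the simple $kG$-modules and the decomposition matrix of $M(G,X)$ equals that of $G$. Consequently $C(k_0M(G,X))=D^\top\, C(\C_0M(G,X))\, D$ with $D$ the decomposition matrix of $G$, while $C(\C_0M(G,X))$ is computed explicitly from the orbit decomposition $X=\biguplus_i (G\times G)/L_i$ via $C_{\chi,\eta}=\delta_{\chi,\eta}+\sum_i(\chi\otimes\bar\eta)(\widehat{L_i})$. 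One then chooses the $L_i$ --- for $G=S_3$ and $p=3$ a suitable choice is written down by hand --- so that $C(\C_0M(G,X))$ is non-singular while $D^\top C(\C_0M(G,X)) D$ is singular. No search through monoids is needed; the monoid is assembled directly from group-theoretic ingredients.
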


We will provide a counterexample to this conjecture, and it is easy to produce further such counterexamples using the same construction. The conjecture was shown to be true in \cite{SteinbergModRepMonoid} for regular monoids and monoids with aperiodic (left or right) stabilisers. We look at the simplest construction of a finite monoid which fails to lie in either of these classes: take a monoid with a zero, let the maximal subgroup at $1$ be some finite group $G$, and put a number of $\mathcal J$-classes below $1$ which square to zero (making them non-regular) and on which $G$ acts highly non-trivially (breaking the aperiodic stabiliser condition). Since these monoids will be described entirely in group-theoretical terms, it is easy to compute their decomposition matrices and Cartan matrices. It remains an interesting question whether there is a theorem which generalises both of the positive results mentioned above. The counterexample constructed below would suggest that one would have to impose some restriction on the action of a maximal subgroup $G_e$ on $eMxMe$, where $e\in M$ is an idempotent, and $x\in M$ is below $e$ in the $\mathcal J$-order.

\section{Construction of a counterexample}

Throughout this section let $G$ be a finite group, and assume $X$ is a finite $(G,G)$-biset. We use ``$\uplus$'' to denote a disjoint union.

\begin{defi}
     Define a monoid
    \[
        M(G, X) = G \uplus X \uplus \{ z  \},
    \]
    where the multiplication within $G$ is just group multiplication and $z$ is a zero in the monoid. Furthermore, the products $gx$ and $xg$ for $g\in G$ and $x\in X$ are given by the biset structure on $X$, and 
    \[
        xy=z \quad \textrm{for all } x,y\in X.
    \] 
\end{defi}
For what follows note that we can view the biset $X$ as a (left) $G\times G$-set by decreeing $(g,h)\cdot x = gxh^{-1}$. We will switch back and forth between $(G,G)$-bisets and $G\times G$-sets.
Note that in the monoid $M(G,X)$ defined above, the non-regular $\mathcal J$-classes are exactly the $G\times G$-orbits on $X$.

\begin{remark}
    Let $k$ be a field.
    We will work exclusively with the contracted monoid algebras $k_0 M(G,X)=k M(G,X) / k z$. We have $k M(G,X) \cong k_0 M(G,X)\times k$, so the Cartan matrix of $k M(G,X)$ is non-singular if and only if  that of $k_0 M(G,X)$ is. In particular, we may replace the monoid algebras in Conjecture~\ref{conjecture cartan} by contracted monoid algebras without changing the conjecture.
\end{remark}

\begin{prop}
    Let $k$ be a field. Up to isomorphism, the simple $k_0M(G,X)$-modules are exactly the simple $kG = k_0M(G,X)/kX$-modules. 
\end{prop}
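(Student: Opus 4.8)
The plan is to realise $kX$, the $k$-span of the set $X$ inside the algebra $A := k_0 M(G,X)$, as a square-zero two-sided ideal, to deduce that it is contained in the Jacobson radical of $A$, and then to invoke the standard correspondence between the simple modules of a finite-dimensional algebra and those of its radical quotient.

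First I would verify that $kX$ is a two-sided ideal of $A$. For this it is enough to multiply the spanning set $X$ by the basis elements of $M(G,X)$: for $g\in G$ and $x\in X$ the products $gx$ and $xg$ again lie in $X$ by the biset structure, while for $x,y\in X$ we have $xy = z$, which is $0$ in the contracted algebra. Thus $kG\cdot kX\subseteq kX$, $kX\cdot kG\subseteq kX$ and $kX\cdot kX = 0$, so $kX$ is a two-sided ideal with $(kX)^2 = 0$; being nilpotent, it is contained in $\rad A$.

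Next, since $kX\subseteq\rad A$, every simple $A$-module is annihilated by $kX$, hence is a module over $A/kX$ with the same lattice of submodules, and so it is simple there as well; conversely, every simple $A/kX$-module inflates to a simple $A$-module along $A\twoheadrightarrow A/kX$, and these two passages are mutually inverse on isomorphism classes. It then remains only to identify the quotient: a basis of $A/kX$ is given by the images of the elements of $G$, and the induced multiplication on this basis is just the group multiplication of $G$ — the only products among basis elements of $M(G,X)$ not already of this shape are those involving an element of $X$, and these are killed in the quotient. Hence $A/kX\cong kG$, and combining this with the previous step proves the Proposition.

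I do not expect a genuine obstacle here. The only two points that require a moment's care are checking that $kX$ really does remain a two-sided ideal once the zero $z$ has been collapsed to $0$ in passing to the contracted algebra, and checking that this collapsing leaves the $G$-part untouched, so that $A/kX$ is literally the group algebra $kG$ rather than some twisted variant of it.
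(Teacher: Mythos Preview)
Your proof is correct and follows exactly the same approach as the paper: both show that $kX$ is a square-zero two-sided ideal of $k_0 M(G,X)$, hence contained in the Jacobson radical, so the simple modules are those of the quotient $k_0 M(G,X)/kX \cong kG$. Your version simply spells out in more detail the verification that $kX$ is an ideal and that the quotient is the group algebra, which the paper leaves implicit.
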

\begin{proof}
    By construction $kX$ is an ideal in $k_0M(G,X)$ satisfying $(kX)^2=0$. Hence $kX \subseteq \Rad(k_0M(G,X))$, which implies the assertion.
\end{proof}

The above also implies that, other than for the unique simple module with apex $z$, the decomposition numbers of $M(G,X)$ are identical to the decomposition numbers of $G$ with respect to any prime $p>0$. That gives us the following.

\begin{prop}
    Let $(K,\OO,k)$ be a $p$-modular system which is splitting for $M(G,X)$, for some prime $p>0$.
    We have 
    \begin{equation}\label{eqn cartan ddtr}
        C(k_0 M(G, X)) = D^\top \cdot C(K_0 M(G, X))\cdot  D,
    \end{equation}
    where $C(k_0 M(G, X))$ and $C(K_0 M(G, X))$ denote the respective Cartan matrices and $D$ denotes the decomposition matrix of $G$.
\end{prop}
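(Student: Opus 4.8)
The plan is to prove \eqref{eqn cartan ddtr} by applying, to the $\OO$-order $\Lambda:=\OO_0 M(G,X)$ (which is $\OO$-free of finite rank, hence an order), the general identity $C(k\Lambda)=E^\top C(K\Lambda)E$ relating the Cartan matrices of $K\Lambda$ and of $k\Lambda$, where $E$ is the decomposition matrix of $\Lambda$, i.e.\ the matrix of the decomposition map $d\colon G_0(K_0M(G,X))\to G_0(k_0M(G,X))$ in the bases of simple modules, and then checking that $E=D$. The equality $E=D$ is essentially the observation made just before the statement: since $KX$ and $kX$ square to zero they lie in the Jacobson radicals of $K_0M(G,X)$ and $k_0M(G,X)$, so the simple modules of these two algebras are canonically those of $KG$ and of $kG$; and if $V$ is a simple $KG$-module and $L\subseteq V$ an $\OO G$-lattice, then inflating everything along the split surjection $\Lambda\twoheadrightarrow\OO G$ produces a $\Lambda$-lattice $L$ in $V$ whose reduction $k\otimes_\OO L$ has $kX$ acting as zero, so its $k_0M(G,X)$-composition factors are exactly its $kG$-composition factors. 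Hence $d$ coincides with the ordinary decomposition map of $G$, i.e.\ $E=D$.

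For the order-theoretic identity $C(k\Lambda)=E^\top C(K\Lambda)E$ I would argue as follows. Since $\OO$ is complete, idempotents lift, so the projective indecomposable $\Lambda$-modules $P_1,\dots,P_n$ reduce to the projective indecomposables $\bar P_j=k\otimes_\OO P_j$ of $k\Lambda$; and as each $P_j$ is $\OO$-free (being a direct summand of a free $\Lambda$-module), it is a full $\OO$-lattice in the projective $K\Lambda$-module $K\otimes_\OO P_j$, whence $[\bar P_j]=d\bigl([K\otimes_\OO P_j]\bigr)$ in $G_0(k\Lambda)$ by the definition of $d$. Writing $P_j=\Lambda e_j$, the multiplicity of the projective cover $Q_m$ of a simple $K\Lambda$-module $V_m$ in $K\otimes_\OO P_j=(K\Lambda)e_j$ equals $\dim_K e_jV_m$ (here $K$ is a splitting field), and this number is the decomposition number $E_{mj}$ by the Brauer-reciprocity identity, both sides being $\rank_\OO(e_jL_m)$ for an $\OO$-lattice $L_m$ in $V_m$. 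Therefore $K\otimes_\OO P_j\cong\bigoplus_m Q_m^{\oplus E_{mj}}$, and substituting $[K\otimes_\OO P_j]=\sum_m E_{mj}[Q_m]$, then $[Q_m]=\sum_i C(K\Lambda)_{im}[V_i]$, then $d([V_i])=\sum_l E_{il}[\bar S_l]$, one reads off from $[\bar P_j]$ that $C(k\Lambda)_{lj}=\sum_{i,m}E_{il}\,C(K\Lambda)_{im}\,E_{mj}=(E^\top C(K\Lambda)E)_{lj}$; together with $E=D$ this is \eqref{eqn cartan ddtr}.

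The step needing the most care is making sure the \emph{same} matrix $D$ occurs in both outer factors — in other words, that passing from $G$ to $M(G,X)$ introduces no additional simple modules and no additional decomposition numbers — and this is precisely what the two preparatory Propositions secure (identifying the simple modules, and placing $KX$ and $kX$ inside the respective radicals). A second point worth emphasising, and the reason \eqref{eqn cartan ddtr} is not simply $C(k_0M(G,X))=D^\top D$ as it would be for a group algebra, is that $K_0M(G,X)$ is \emph{not} semisimple, since $KX$ is a non-zero nilpotent ideal, so $C(K_0M(G,X))$ is a genuine non-trivial middle factor. Everything beyond this is routine bookkeeping in Grothendieck groups; alternatively, one may cite the identity $C(k\Lambda)=E^\top C(K\Lambda)E$ from the representation theory of $\OO$-orders and only supply the verification that $E=D$.
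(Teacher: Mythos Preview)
Your argument is correct. Both you and the paper follow the same two-step strategy: invoke the general identity $C_k=E^\top C_K E$ for the relevant decomposition matrix $E$, and then identify $E$ with the decomposition matrix $D$ of $G$. The paper does this by citation: it quotes \cite[Theorem~3.5]{SteinbergModRepMonoid} for the Cartan--decomposition relation for (non-contracted) monoid algebras and \cite[Proposition~3.3]{SteinbergModRepMonoid} for the fact that the monoid decomposition matrix equals that of $G$, then passes to the contracted algebra. You instead work directly with the contracted $\OO$-order $\Lambda=\OO_0M(G,X)$, supply a self-contained proof of the order-theoretic identity via idempotent lifting and Brauer reciprocity, and derive $E=D$ from the inclusion $kX,KX\subseteq\Rad$ already established. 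Your route is slightly more elementary in that it avoids appealing to monoid-specific machinery and makes transparent why the non-semisimplicity of $K_0M(G,X)$ produces a genuine middle factor; the paper's route is shorter but relies on the reader having Steinberg's results to hand.
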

\begin{proof}
    Equation~\eqref{eqn cartan ddtr} follows from the corresponding formula for the (non-contracted) monoid algebras, which is given in \cite[Theorem~3.5]{SteinbergModRepMonoid}. The fact that the decomposition matrix in this formula is equal to the decomposition matrix of $G$ follows from \cite[Proposition~3.3]{SteinbergModRepMonoid}.
\end{proof}

Of course the Cartan matrices over $K$ and over $\C$ coincide, and the crucial point of the above is that the decomposition matrix $D$ does not depend on $X$ at all. So the strategy is to vary $X$ in such a way as to make the Cartan matrix over $k$ singular.

\begin{prop}\label{prop cartan matrix}
    Let $L_1,\ldots,L_n$ be subgroups of $G\times G$, not necessarily all distinct. Let $X$ denote the $(G,G)$-biset corresponding to 
    \[
        \biguplus_{i=1}^n (G\times G)/L_i.
    \]
    Then the Cartan matrix of $\C_0 M(G,X)$ is given by
    \begin{equation}\label{eqn formula cartan}
          C_{\chi, \eta} = \delta_{\chi,\eta} + \sum_{i=1}^n (\chi\otimes \bar \eta)(\widehat L_i),
    \end{equation}
    where $\delta_{\chi,\eta}$ denotes the Kronecker delta, $\bar\eta$ is the complex conjugate of $\eta$, and $\widehat L_i = \frac{1}{|L_i|}\sum_{g\in L_i} g$.
\end{prop}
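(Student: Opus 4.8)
The plan is to compute the Cartan matrix of $A:=\C_0M(G,X)$ directly from its module structure, using that its radical is square-zero. Since $G$ is a submonoid of $M(G,X)$, the subspace $\C G$ is a subalgebra of $A$ and the projection $A\to\C G$ (killing $\C X$) splits it; as $\C X$ is an ideal with $(\C X)^2=0$ (because $xy=z$ for $x,y\in X$), we get $\C X=\Rad A$ and $A/\Rad A\cong\C G$. For each $\chi\in\Irr(G)$ pick a primitive idempotent $e_\chi\in\C G$ with $\C Ge_\chi\cong S_\chi$; since an idempotent of $A$ is primitive iff its image in $A/\Rad A$ is, $e_\chi$ is also primitive in $A$, so $P_\chi=Ae_\chi$ and $\Rad P_\chi=(\Rad A)e_\chi=\C X\,e_\chi$. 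Hence
\[
    C_{\chi,\eta}=[P_\chi:S_\eta]=\delta_{\chi,\eta}+[\,\C X\,e_\chi:S_\eta\,],
\]
where the bracket denotes a composition multiplicity of the left $\C G$-module $\C X\,e_\chi$, which is semisimple because $\Rad A$ annihilates it ($(\C X)^2=0$).

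Next I would identify $\C X\,e_\chi\cong\C X\otimes_{\C G}S_\chi$ as left $\C G$-modules: as $\C G$ is semisimple, $\C Ge_\chi$ is a direct summand of the regular module, so $\C X\otimes_{\C G}\C Ge_\chi$ is precisely the left-$\C G$-submodule $\C X\,e_\chi$ of $\C X\otimes_{\C G}\C G=\C X$. It remains to compute the composition factors of $\C X\otimes_{\C G}S_\chi$, where $\C X=\bigoplus_{i=1}^n\C[(G\times G)/L_i]$ carries the $(\C G,\C G)$-bimodule structure induced by the biset $X$. Passing from bimodules to left $\C[G\times G]$-modules turns this bimodule, compatibly with the chosen $G\times G$-set structure on $X$, into the permutation module $\bigoplus_i\operatorname{Ind}_{L_i}^{G\times G}\mathbf 1$. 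For an irreducible $S_\mu\boxtimes S_\nu$ of $G\times G$ the corresponding bimodule is of the form $S_\mu\otimes_\C S_\nu'$, with $S_\nu'$ the one-sided (inversion) twist of $S_\nu$ into a right module, and a short Schur's-lemma computation gives $(S_\mu\otimes_\C S_\nu')\otimes_{\C G}S_\chi\cong S_\mu$ when $\nu$ matches $\chi$ (up to complex conjugation) and $0$ otherwise. Therefore $[\,\C X\otimes_{\C G}S_\chi:S_\eta\,]$ equals the sum over $i$ of the multiplicity, in $\operatorname{Ind}_{L_i}^{G\times G}\mathbf 1$, of one specific irreducible of $G\times G$ of the form $S_\chi\boxtimes S_\eta^{\vee}$ (up to swapping the two tensor factors).

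Finally, by Frobenius reciprocity the multiplicity of an irreducible $W$ of $G\times G$ in $\operatorname{Ind}_{L_i}^{G\times G}\mathbf 1$ equals $\dim W^{L_i}=\frac{1}{|L_i|}\sum_{\ell\in L_i}\tr(\ell\mid W)$, since $\widehat L_i$ acts on any $\C[G\times G]$-module as the projector onto $L_i$-fixed points; for $W=S_\chi\boxtimes S_\eta^{\vee}$ this is exactly $(\chi\otimes\bar\eta)(\widehat L_i)$, and summing over $i$ yields \eqref{eqn formula cartan}. The one genuinely delicate point — and the thing I would be most careful about — is the bookkeeping of conventions: left versus right $G$-action on $X$, the dictionary between $(G,G)$-bisets and $G\times G$-sets, which factor of $G\times G$ is paired with $\chi$ and which with $\eta$, and whether the Cartan matrix is indexed so that $C_{\chi,\eta}=[P_\chi:S_\eta]$ or by the transpose. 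An inconsistent choice replaces $(\chi\otimes\bar\eta)(\widehat L_i)$ by $(\eta\otimes\bar\chi)(\widehat L_i)$, so these must be pinned down to land on the stated form; the representation-theoretic content of the argument is otherwise routine.
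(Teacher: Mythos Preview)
Your argument is correct and follows essentially the same route as the paper: both observe that primitive idempotents of $\C G$ stay primitive in $A=\C_0M(G,X)$, separate the Cartan entry into the $\C G$-contribution $\delta_{\chi,\eta}$ and the $\C X$-contribution, and evaluate the latter as a multiplicity in the permutation module $\operatorname{Ind}_{L_i}^{G\times G}\mathbf 1$ via Frobenius reciprocity. The only cosmetic difference is that the paper works directly with $\dim e_\chi A e_\eta$, whereas you phrase things through composition multiplicities $[P_\chi:S_\eta]$ and the tensor product $\C X\otimes_{\C G}S_\chi$; your caveat about the transpose convention is well taken, since these two formulations differ by exactly that swap.
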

\begin{proof}
    For any $\chi\in \Irr_\C(G)$ let $e_\chi$ denote a primitive idempotent in $\C G$ such that $\C G e_{\chi}$ is a simple $\C G$-module with character $\chi$. By construction, the idempotent $e_\chi$ remains primitive in $\C_0 M(G,X)$, and the Cartan number $C_{\chi,\eta}$ is simply  the dimension of $e_\chi \C_0 M(G,X) e_\eta$. So 
    \[
        C_{\chi,\eta} = \dim e_\chi \C G e_\eta + \dim e_\chi \C X e_\eta =  \dim e_\chi \C G e_\eta + \sum_{i=1}^n \dim (e_\chi \otimes e_{\bar \eta} \cdot \C (G\times G/L_i)).
    \]
    Now, passing to characters and using Frobenius reciprocity gives 
    \[
        \begin{array}{rcl}
        \dim (e_\chi \otimes e_{\bar \eta} \cdot \C (G\times G/L_i)) 
        &=&  \dim (e_\chi \otimes e_{\bar \eta} \cdot {\rm Ind}_{L_i}^{G\times G} \mathbb C)\\
        &=& (\chi\otimes \bar \eta, {\rm Ind}_{L_i}^{G\times G} 1)_G = ({\rm Res}_{L_i}^{G\times G}(\chi\otimes \bar \eta),  1)_{L_i}\\
        &=& (\chi\otimes \bar \eta)(\widehat L_i).
        \end{array}
    \]
    Moreover, $\dim e_\chi \C G e_\eta = (\chi,\eta)_G =\delta_{\chi,\eta}$. The asserted formula now follows.
\end{proof}

\begin{defi}
    For a subgroup $L\leq G\times G$ define a matrix $\Delta(L)$ with rows and columns indexed by the elements of $\Irr_\C(G)$ as follows:
    \[
        \Delta(L)_{\chi,\eta}= (\chi\otimes \bar \eta)(\widehat L).
    \]
\end{defi}

With equations \eqref{eqn cartan ddtr} and \eqref{eqn formula cartan} in mind it is now fairly clear what we need to do to disprove Conjecture~\ref{conjecture cartan}. We need to find coefficients $z_L \in \Z_{\geq 0}$ such that 
\[ 
    \id + \sum_{L\leq G\times G} z_L \Delta(L)
\]
is non-singular and 
\[ 
    D^\top \cdot \left(\id + \sum_{L\leq G\times G} z_L \Delta(L) \right)\cdot D
\]
is singular, where $D$ is the decomposition matrix of $G$ with respect to some prime $p>0$. 
The following concrete example was found using \textsc{Gap}, but it can be verified by hand. Similar examples are easy to find in other finite groups. 
\begin{example}
    Let $G=S_3$, the symmetric group on three letters. Set 
    \[
        L_a = \{ (g,g^{-1}) \ | \ g \in G\}, \quad L_b = \{1\} \times \langle (1,2)\rangle, \quad L_c = G \times \langle (1,2,3) \rangle.
    \]
    The character table of $S_3$ and its decomposition matrix for $p=3$ are given by 
    \[
        \begin{array}{rccc}
        & () & (1,2) & (1,2,3)\\\hline
        \chi_{(3)} & 1 & 1 & 1\\
        \chi_{(2,1)} & 2 & 0 & -1\\
        \chi_{(1^3)} & 1 & -1 & 1
        \end{array}
        \quad \textrm{and}\quad
        \begin{array}{r|cc}
            & \psi_{(3)} & \psi_{(2,1)} \\\hline
            \chi_{(3)} & 1 & 0 \\
            \chi_{(2,1)} & 1& 1\\
            \chi_{(1^3)} & 0 & 1    
        \end{array},
    \]
    using the usual labelling of ordinary and modular characters of the symmetric groups.
    It is easy to see that $\Delta(L_a)$ is the $3\times 3$-identity matrix. As both $L_b$ and $L_c$ are of the form $H_1\times H_2$ for subgroups $H_1,H_2\leq G$ we can use the formula
    \[
        (\chi\otimes \bar \eta)(\widehat{H_1\times H_2}) = (\chi\otimes \bar \eta)(\widehat{H_1}\cdot \widehat{H_2}) = \chi(\widehat{H_1})\cdot \bar{\eta}(\widehat{H_2}) 
    \]
    to simplify evaluation of the entries of $\Delta(L_b)$ and $\Delta(L_c)$. We obtain
    \[
        \Delta(L_b) = \left( \begin{array}{ccc} 1 & 1 & 0 \\ 2 & 2& 0\\ 1 & 1 & 0  \end{array} \right)
        \quad \textrm{and}\quad
        \Delta(L_c) = \left( \begin{array}{ccc} 1 & 0 & 1\\ 0&0&0\\0&0&0 \end{array} \right).
    \]
    Now set $L_1=\ldots=L_4 = L_a$ (four copies), $L_5=L_6=L_b$ (two copies) and $L_7=\ldots=L_{171}=L_c$ (165 copies), and define $X$ as in Proposition~\ref{prop cartan matrix}. Then the Cartan matrices become
    \[
        C(\C_0 M(G,X)) = \left(
            \begin{array}{ccc}
                 172 & 2 & 165 \\4& 9& 0 \\ 2&2& 5
            \end{array}
        \right),
    \]
    which is non-singular, 
    and over a field $k$ of characteristic three 
    \[
        C(k_0 M(G,X)) = \left(\begin{array}{ccc} 1 & 1 & 0\\ 0 & 1 & 1\end{array} \right)\cdot \left(
            \begin{array}{ccc}
                 172 & 2 & 165 \\4& 9& 0 \\ 2&2& 5
            \end{array}
        \right)
        \cdot \left(\begin{array}{cc} 1 & 0 \\ 1 & 1\\ 0 & 1 \end{array} \right)
        = \left(
            \begin{array}{cc}
                187 & 176 \\ 17 & 16 
            \end{array}
         \right),
    \]
    which is singular.
\end{example}

\begin{corollary}
    Conjecture~\ref{conjecture cartan} does not hold true in general.
\end{corollary}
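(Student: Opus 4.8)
The plan is to combine the three structural ingredients already established in the excerpt — the Cartan factorisation \eqref{eqn cartan ddtr}, the explicit formula \eqref{eqn formula cartan} for the complex Cartan matrix in terms of the matrices $\Delta(L)$, and the freedom to choose arbitrarily many copies of each $(G\times G)/L_i$ — and then exhibit a single pair $(G,X)$ for which $C(\C_0 M(G,X))$ is non-singular while $C(k_0 M(G,X))$ is singular for $k$ of characteristic $p=3$. By the preceding remark, passing to contracted algebras does not affect the statement of Conjecture~\ref{conjecture cartan}, so it suffices to produce such a counterexample for $\C_0 M(G,X)$ and $k_0 M(G,X)$.

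First I would fix $G = S_3$, since its character table and its $p=3$ decomposition matrix $D$ are small and completely explicit, and compute the three matrices $\Delta(L_a)$, $\Delta(L_b)$, $\Delta(L_c)$ for the subgroups $L_a, L_b, L_c \leq G\times G$ listed in the example. Here I would use the observation that $L_a = \{(g,g^{-1})\}$ gives $\Delta(L_a) = \id$ (this is just the orthogonality relation $(\chi,\eta)_G = \delta_{\chi,\eta}$ applied to the induced regular representation of the diagonal), and that for a product subgroup $H_1\times H_2$ one has $\Delta(H_1\times H_2)_{\chi,\eta} = \chi(\widehat{H_1})\,\overline{\eta}(\widehat{H_2})$, which reduces everything to evaluating characters of $S_3$ on the averaged idempotents $\widehat{H}$ for $H\in\{\langle(1,2)\rangle, \langle(1,2,3)\rangle\}$. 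Next I would choose the multiplicities: four copies of $L_a$, two copies of $L_b$, $165$ copies of $L_c$, let $X$ be the corresponding disjoint union of transitive $(G,G)$-bisets, and invoke Proposition~\ref{prop cartan matrix} to read off
\[
    C(\C_0 M(G,X)) = \id + 4\Delta(L_a) + 2\Delta(L_b) + 165\Delta(L_c),
\]
a concrete $3\times 3$ integer matrix whose determinant is a nonzero integer — so this Cartan matrix is non-singular, and the hypothesis of Conjecture~\ref{conjecture cartan} holds. Then, applying \eqref{eqn cartan ddtr} with the $3\times 2$ decomposition matrix $D$ of $S_3$ at $p=3$, I would compute $C(k_0 M(G,X)) = D^\top C(\C_0 M(G,X)) D$, a $2\times 2$ integer matrix, and check that its determinant is zero; this shows the conclusion of Conjecture~\ref{conjecture cartan} fails, giving the corollary.

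The computation is entirely routine once the multiplicities $(4,2,165)$ are in hand — each step is a finite matrix product — so there is no serious mathematical obstacle. The only genuinely non-obvious point is the \emph{choice} of subgroups and multiplicities that makes $D^\top C D$ singular while keeping $C$ non-singular; as the excerpt notes this was located by a \textsc{Gap} search over small groups, but once the data is exhibited the verification is elementary and could be done by hand. I would therefore present the corollary's proof as simply: \emph{the example above provides the required counterexample}, pointing to the displayed Cartan matrices and noting that $\det\!\left(\begin{smallmatrix}187 & 176\\ 17 & 16\end{smallmatrix}\right) = 187\cdot 16 - 176\cdot 17 = 0$ while the $3\times 3$ complex Cartan matrix has nonzero determinant.
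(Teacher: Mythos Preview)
Your proposal is correct and follows exactly the paper's own approach: the corollary is an immediate consequence of the explicit Example with $G=S_3$ and multiplicities $(4,2,165)$, and you have accurately recapitulated the computations showing $C(\C_0 M(G,X))$ is non-singular while $C(k_0 M(G,X))=\left(\begin{smallmatrix}187 & 176\\ 17 & 16\end{smallmatrix}\right)$ has determinant zero.
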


\bibliographystyle{alpha}
\bibliography{refs}

\end{document}